\documentclass[11 pt]{article}
\usepackage{amsmath, enumerate, amsthm, amssymb,verbatim,fullpage,multicol,cases}
\usepackage{graphicx}
\usepackage{hyperref}
\usepackage[table]{xcolor}
\usepackage[T1]{fontenc}
\usepackage{appendix}
\usepackage[utf8]{inputenc}
\usepackage{soul}
\usepackage{braket}
\usepackage{authblk}
\usepackage{soul}
\usepackage{enumitem}

\usepackage{multirow}
\usepackage{float}
\usepackage{verbatim}
\restylefloat{table}
\newtheorem{theorem}{Theorem}[section]

\newtheorem{corollary}[theorem]{Corollary}

\theoremstyle{definition}
  \newtheorem{definition}[theorem]{Definition}
  \newtheorem{example}[theorem]{Example}
\usepackage[mathscr]{euscript}
\usepackage{float}

\definecolor{lightgray}{gray}{0.85}
\definecolor{LightCyan}{rgb}{0.88,1,1}
\definecolor{midgray}{gray}{.6}

\theoremstyle{definition}

\newtheoremstyle{named}{}{}{\itshape}{}{\bfseries}{.}{.5em}{\thmnote{#3 }#1}
\theoremstyle{named}

\def\S{\mathfrak{S}}

\def\P{\mathcal{P}}
\def\pS{p_S}

\newcommand{\tcb}[1]{\textcolor{blue}{#1}}
\newcommand{\tcr}[1]{\textcolor{red}{#1}}

\title{A proof of the peak polynomial positivity conjecture}
 \author[1]{Alexander Diaz-Lopez\thanks{\textcolor{blue}{\href{mailto:adiazlo1@swarthmore.edu}{adiazlo1@swarthmore.edu}}}}
 \author[2]{Pamela E. Harris\thanks{\textcolor{blue}{\href{mailto:pamela.e.harris@williams.edu}{pamela.e.harris@williams.edu}}.}}
 \author[3]{Erik Insko\thanks{\textcolor{blue}{\href{mailto:einsko@fgcu.edu}{einsko@fgcu.edu}}}}
 \author[4]{Mohamed Omar\thanks{\textcolor{blue}{\href{mailto:omar@g.hmc.edu}{omar@g.hmc.edu}}}}
 \affil[1]{Department of Mathematics and Statistics, Swarthmore College}
 \affil[2]{Department of Mathematics and Statistics, Williams College}
 \affil[3]{Department of Mathematics, Florida Gulf Coast University}
 \affil[4]{Department of Mathematics, Harvey Mudd College}
 
\date{ }
\begin{document}
  \maketitle

 \begin{abstract}
We say that a permutation $\pi=\pi_1\pi_2\cdots \pi_n \in 
\mathfrak{S}_n$ has a peak at index $i$ if $\pi_{i-1} < \pi_i > \pi_{i+1}$.  Let $\P(\pi)$
denote the set of indices where $\pi$ has a peak.  Given a set $S$ of positive
integers, we define $\P_S(n)=\{\pi\in\mathfrak{S}_n:\P(\pi)=S\}$. In 2013
Billey, Burdzy, and Sagan showed that for subsets of positive
integers $S$ and sufficiently large $n$,
$| \P_S(n)|=p_S(n)2^{n-|S|-1}$ where $p_S(x)$ is a polynomial depending
on $S$. They gave a 
recursive formula for $p_S(x)$ involving an alternating sum, and they conjectured that the coefficients 
of $p_S(x)$ expanded
in a binomial coefficient basis centered at $\max(S)$ are all
nonnegative. In this paper we introduce a new recursive formula for $|\P_S(n)|$ without alternating sums, and we use this 
recursion to prove that their conjecture is true.  
\end{abstract}
\noindent 
\textbf{Keywords:} binomial coefficient, peaks, peak polynomial, permutation,  positivity conjecture.

\section{Introduction}
Let $[n]:=\{1,2,\ldots, n\}$  and let $\S_n$ denote the symmetric group on $n$ letters.
Let $\pi=\pi_1\pi_2\ldots \pi_n$ denote the one-line notation for $\pi\in\S_n$. We say that $\pi$ has a \emph{peak} at index $i$ if 
$\pi_{i-1}<\pi_i>\pi_{i+1}$
and define the peak set of a permutation $\pi$ to be the set: \[ \P(\pi) = \{i \in [n]\, \vert \, \mbox{ $\pi$ has a peak at $i$}\}. \]
Given a subset $S \subseteq [n]$
 we denote
 the set of all permutations with peak set $S$ by
\[\P_S(n) = \{ \pi \in \S_n \, \vert \,  \P(\pi) = S\}.\]   
Whenever $\P_S(n)\neq \emptyset$, we say $S \subseteq [n]$ is \emph{$n$-admissible} or simply \emph{admissible} when the $n$ is understood. If $S$ is $n$-admissible then it is $k$-admissible for any $k \geq n$.

Billey, Burdzy and Sagan first studied the subsets $\P_S(n) \subset \mathfrak{S}_n$ for $n$-admissible sets $S$ in 2013 
\cite{BBS13}.  Their work was motivated by a problem in probability theory which explored the relationship between mass 
distribution on graphs and random permutations with specific peak sets \cite{BBPS15}.  One of their foundational results established that for an $n$-admissible set $S$ 
	\begin{equation}
		 | \P_S(n) |\ = \ p_S(n) 2^{n-| S|-1} \label{eq:BBZ}
	\end{equation}
where $p_S(x)$ is a polynomial depending on $S$, which they called the \emph{peak polynomial} of $S$. 
It was shown that $p_S(x)$ has degree $\max(S)-1$, and that $p_S(x)$ takes on integral values when evaluated at integers 
\cite[Theorem 1.1]{BBS13}.  
Similar observations were made for peak polynomials in  other classical Coxeter groups (see the work of Castro-Velez, 
Diaz-Lopez, Orellana, Pastrana \cite{CV14} and Diaz-Lopez, Harris, Insko, and Perez-Lavin \cite{DLHIP}).

Using the method of finite differences, Billey, Burdzy, and Sagan gave closed formulas for the peak polynomials $p_S(x)$ 
in various special cases.
The \emph{finite forward difference} operator $\Delta$ is a linear operator defined by $ (\Delta 
f)(x)=f(x+1)-f(x)$. Iterating this 
operator gives higher order differences defined by \[ (\Delta^jf)(x)=(\Delta^{j-1}f)(x+1)-(\Delta^{j-1}f)(x). \] Using 
Newton's forward difference formula, Billey, Burdzy, and Sagan expanded $p_S(x)$ in the binomial basis centered at $k$ as 
\begin{equation} p_S(x)=\displaystyle\sum_{j=0}^{\max(S)}(\Delta^jp_S)(k)\binom{x-k}{j} \label{eqn:binomial}\end{equation}
and conjectured that for any admissible set $S$ with $m=\max(S)$ 
each coefficient $(\Delta^jp_S)(m)$ is a positive integer for $1\leq j\leq m-1$ \cite[Conjecture 14]{BBS13}. This 
conjecture has become known as the \emph{positivity conjecture} for peak polynomials.
\begin{example} Below is a table of forward 
differences for the peak polynomial $p_{\{4,6\}}(x)$.  The $(j,k)$ entry in this table is the 
coefficient $\Delta^j(p_S(k))$ of $\binom{x-k}{j}$ in the expansion of $p_S(x)$ in the binomial basis centered at $k$. 

\begin{table}[h!]
\begin{center}
 \begin{tabular}{| l | rrrrrrr |} \hline
 $j,k$  &    0         & 1 & 2 & 3 & 4 & 5     & 6  \\ \hline   
  0    & \tcr{4} & 2 & 2 & 2 & 0 & $-3$ & \tcb{0} \\
  1     & \tcr{$-2$} & 0 & 0 & $-2$ & $-3$ & {3} & \tcb{25} \\
  2    & \tcr{2} & 0 & $-2$ & $-1$ & {6} & 22 & \tcb{50} \\
  3     & \tcr{$-2$} & $-2$ & 1 & {7} & 16 & 28 & \tcb{43} \\
  4     & \tcr{0} & 3 & {6} & 9 & 12 & 15 & \tcb{18} \\
  5     & \tcr{3} & {3} & 3 & 3 & 3 & 3 & \tcb{3} \\
  6     & \tcr{0} & 0 & 0 & 0 & 0 & 0 & \tcb{0} \\ \hline
\end{tabular}  \end{center}
\caption{Forward Difference Table for the Peak Polynomial $p_{\{4,6\}}(x)$}
\end{table}
For example, we expand $p_{\{4,6\}}(x)$ in the binomial bases centered at $0$ and $6$ as 
\begin{align*} p_{\{4,6\}}(x) & = \tcr{4} { x \choose 0 } \tcr{- 2} {x \choose 1} + \tcr{2} { x \choose 2}  \tcr{-
2} { x 
\choose 3}  + \tcr{0} {x \choose 4} + \tcr{3} {x \choose 5}+\tcr{0} {x \choose 6} \\
& \hspace{-8mm}=  \tcb{0} { x-6 
\choose 0}  + \tcb{25} { x-6 \choose 1}  + \tcb{50} {x-6 \choose 2}  + \tcb{43} { x-6 \choose 3}  + \tcb{18}{ x-6 \choose 4} + 
\tcb{3} {x-6 \choose 5}+\tcb{0} {x-6 \choose 6} . \end{align*}
\end{example}

Billey, Burdzy, and Sagan proved the positivity conjecture holds when $|S| \leq 1$, verified it computationally for all $2^m$ subsets containing a largest value $m =\max(S) =20$, and showed that $p_S(m)=0$ for 
any set $S$ \cite[Lemma~15]{BBS13}.
In 2014, Billey, Fahrbach, and Talmage posed a stronger conjecture bounding the moduli of the roots of $p_S (x)$, 
which they verified for all peak sets with $\max(S) \leq 15$ \cite[Conjecture 1.5]{BFT16}. They also discovered a computationally 
efficient recursive algorithm for computing $p_S(x)$, and  showed that $p_S (k) > 0$ for $k > m$ and that the positivity 
conjecture holds in several special cases, including when the position of the last peak of $S$ is three more than the position of 
the penultimate peak \cite[Lemmas~5.4~and~4.6]{BFT16}.

Our main result is the following theorem which proves the positivity conjecture in all cases.

\begin{theorem} \label{thm:main}
If $S\subseteq [n]$ is a non-empty admissible set with $m=\max(S)$, then 
$(\Delta^j p_S)(k)> 0$ for all $1 \leq j\leq m-1$ and $k\geq m$, and $(\Delta^m p_S)(x)=0$.
\end{theorem}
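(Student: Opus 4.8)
The plan is to avoid working with the alternating-sum recursion of Billey--Burdzy--Sagan and instead introduce a new, "positive" recursion for $|\P_S(n)|$, from which the positivity of the binomial coefficients centered at $m=\max(S)$ will follow almost bookkeeping-style. First I would set up the combinatorial recursion: given an admissible set $S$ with $m=\max(S)$, I want to express $\P_S(n)$ in terms of sets $\P_{S'}(n')$ where $S'$ is obtained from $S$ by deleting the largest peak (and possibly shifting), together with explicitly counted "gluing" data describing how the values larger than some threshold are interleaved. Concretely, one conditions on the position of the entry $n$ (or on the structure of the suffix of the one-line notation after the last peak): this splits a permutation in $\P_S(n)$ into a shorter permutation whose peak set is $S\setminus\{m\}$ and a tail that is essentially unconstrained, contributing a factor that is a polynomial in $n$ with manifestly nonnegative binomial-coefficient expansion. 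The key point is that, unlike the inclusion--exclusion recursion, every term produced this way is a product of a previously-understood peak count and an explicitly nonnegative polynomial factor.

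The second step is to translate this recursion on $|\P_S(n)|$, via the identity $|\P_S(n)|=p_S(n)2^{n-|S|-1}$ from \eqref{eq:BBZ}, into a recursion purely on the peak polynomials $p_S(x)$. Here I would be careful with the powers of two: deleting a peak changes $|S|$ by one and the length changes, so the $2^{n-|S|-1}$ factors must be tracked to extract the correct recursion for $p_S(x)$ itself. The outcome should be a formula expressing $p_S(x)$ as a nonnegative-integer-combination of shifts/products of $p_{S'}(x)$ for smaller admissible sets $S'$, plus an explicit polynomial base case when $|S|\le 1$ (which Billey--Burdzy--Sagan already verified, or which can be computed directly).

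The third step is the induction proving positivity of $(\Delta^j p_S)(k)$ for $k\ge m$ and $1\le j\le m-1$. I would induct on $|S|$ (or on $m$), using two facts about the forward difference operator: it is linear, and it interacts predictably with integer shifts of the argument and with products — in particular $\Delta^j$ applied at a point $k\ge m$ of a product of polynomials can be expanded (a discrete Leibniz rule) into nonnegative combinations of $(\Delta^i p_{S'})(k')$ with $k'\ge \max(S')$ and the $\Delta$-coefficients of the explicit polynomial factor, all of which are nonnegative by the inductive hypothesis and by direct computation. The vanishing statements $(\Delta^m p_S)(x)=0$ and $p_S(m)=0$ (the latter being \cite[Lemma~15]{BBS13}) pin down the degree $m-1$ and the constant term of the centered expansion, so the nontrivial content is exactly the strict positivity for $1\le j\le m-1$; strictness comes from the base case being strictly positive and nonnegative combinations of it preserving that, provided at least one term survives, which I would check is always the case because $\P_S(n)\ne\emptyset$.

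The main obstacle I anticipate is finding the right combinatorial decomposition in the first step: it must simultaneously (i) reduce $S$ to a genuinely smaller admissible set, (ii) produce only nonnegative polynomial factors, and (iii) be clean enough that the discrete-Leibniz bookkeeping in step three actually closes. Getting a recursion that is "positive on the nose" — rather than one that merely computes the right answer after cancellation — is the crux; once such a recursion is in hand, the forward-difference estimates should be routine, if somewhat intricate, manipulations with $\Delta^j\binom{x-a}{i}$ and the Vandermonde-type identities for $\Delta$ of products.
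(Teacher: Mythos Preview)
Your high-level strategy---find a sign-free recursion for $|\P_S(n)|$, convert it via \eqref{eq:BBZ} to a polynomial identity, and induct---is exactly the paper's. But the specific decomposition you propose is not the one that works, and the paper's choice sidesteps the complications you anticipate.

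You suggest splitting off the \emph{last peak}: reduce $S$ to $S\setminus\{m\}$ and peel away a tail contributing an explicit polynomial factor. The natural way to do this (condition on the suffix after position $m$, or on the first descent after the last peak) is precisely what produces the Billey--Burdzy--Sagan alternating recursion; there is no evident way to make that spatial split positive on the nose. Moreover, a multiplicative recursion of the form $p_S(x)=\sum c_i\,p_{S'}(x)\cdot q_i(x)$ would force you through the discrete Leibniz rule, and controlling strict positivity of $\Delta^j$ of such products at $k\ge m$ is genuinely delicate.

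The paper instead conditions on the position of the \emph{value} $q+1$ (equivalently, builds $\P_S(q+1)$ by inserting $q+1$ into permutations of $\S_q$). This does not reduce $|S|$; rather it relates $\P_S(q+1)$ to $\P_T(q)$ for several sets $T$---namely $S$ itself, the shifts $S_{i_\ell}$, and the deletions $\widehat{S}_{i_\ell}$---all of which have $\max(T)\le m$ and, crucially, $\max(T)<m$ whenever $T\ne S$. After dividing out the powers of $2$, the $|\P_S(q)|$ terms on the two sides combine to give $\Delta p_S(x)$ directly as a \emph{pure sum} $\sum_\ell p_{S_{i_\ell}}(x)+\sum_\ell p_{\widehat{S}_{i_\ell}}(x)$, with no polynomial factors at all. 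Induction on $m$ then finishes immediately: apply $\Delta^{j-1}$ to both sides, and every summand on the right is nonnegative at $k\ge m$ by the inductive hypothesis, with at least one (coming from $\widehat{S}_{i_\ell}$ with $\ell<s$, which has $\max=m-1$) strictly positive. No Leibniz bookkeeping, no product estimates.

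So the gap in your plan is step one: the ``remove the last peak'' decomposition you sketch does not obviously yield a positive recursion, and the paper's actual positive recursion comes from a different (and simpler) combinatorial move---tracking the largest value rather than the largest peak position.
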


We prove Theorem \ref{thm:main} at the end of Section \ref{sec:jefe}.
As a consequence of this theorem and Equation~(\ref{eqn:binomial}), 
if $S$ is an $n$-admissible set and $k > \max(S)$, then $p_S(k) > 0$. Positivity of coefficients in a given binomial basis is a phenomenon that occurs throughout combinatorics.  A particular illuminating example comes from Ehrhart Theory.  For a $d$-dimensional integral convex polytope $P$, recall that $i_P(n)$ is the number of integer points in the $n$-th dilation of $P$.  Ehrhart proved that $i_P(n)$ is a polynomial in $n$ of degree $d$, so classical techniques in generating functions establish that 
$i_P(n) = \sum_{j=0}^d h^*_j \binom{n+d-j}{d}.$
The vector $(h^*_0,h^*_1,\ldots,h^*_d)$ is called the $h^*$-vector of $P$, 
and a celebrated theorem of Stanley confirms that $h^{*}_j \geq 0$ for all $j$, \cite[Theorem 1]{Stanley80}.  

In addition to positivity, we have verified that the coefficients $\Delta^{j}p_S(m)$ are $\log$-concave for all $1 \leq j\leq m-1$ and all admissible sets $S$ with $m=\max(S)\leq 20$, and we suspect that $\log$-concavity holds in general. We note that $\log$-concavity along with our positivity result would imply the unimodality of the coefficients $\Delta^{j}p_S(m)$ for $1\leq j\leq m-1$. If unimodality is not true in general, a related problem would be classifying peak sets for which unimodality holds.  Such problems are a major theme throughout combinatorics (for instance, they are central in Ehrhart Theory \cite{BraunD}) and could lead to many interesting and fruitful combinatorial questions.

In addition, Theorem \ref{thm:main} provides supporting evidence for Billey, Fahrbach, and Talmage's stronger conjecture bounding the moduli of the zeros of peak polynomials \cite[Conjecture 1.5]{BFT16}.  After stating that conjecture, they noted that Ehrhart, chromatic, and Hilbert polynomials are all examples of polynomials with integer coefficients (in some basis) whose roots are bounded in the complex plane \cite{BDLDPS,BraunD,Brenti92,BRW,BCKV,Pfeifle,RV}. Their conjecture suggests that peak polynomials fit into the family of polynomials sharing these properties.


\section{Peak polynomial positivity result}\label{sec:jefe}
We begin with a definition which is used throughout the rest of this paper.
\begin{definition}
Let $S=\{i_1,i_2,\ldots, i_s\}\subseteq [n+1]$ with $i_1<i_2<\ldots<i_s$ be an $(n+1)$-admissible set. For $1\leq \ell\leq s$ define
\begin{align*}
S_{i_\ell}&=\{i_1,i_2,\ldots,i_{\ell-1},i_\ell-1,i_{\ell+1}-1,i_{\ell+2}-1,\ldots,i_{s}-1\}, \\
\widehat{S}_{i_\ell}&=\{i_1,i_2,\ldots,i_{\ell-1},\widehat{i_\ell},i_{\ell+1}-1,i_{\ell+2}-1,\ldots,i_{s}-1\}, 
\end{align*} 
where the notation $\widehat{i_\ell}$ means that the element $i_\ell$ has been omitted from the set.
\end{definition}
In general, the sets $S_{i_\ell}$ might not be $n$-admissible as they may contain two adjacent integers when $i_{\ell-1}$ 
and $i_{\ell}-1=i_{\ell-1}+1$. 
However, the sets $\widehat{S}_{i_\ell}$ are always $n$-admissible.
\begin{example}
If $S=\{3,5,8\}\subseteq[9]$, then \\
\begin{center}
\begin{tabular}{lll}
$S_3={\{2,4,7\}}$,&$S_{5}=\{3,4,7\}$,&$S_8=\{3,5,7\}$,\\
$\widehat{S}_{3}=\{4,7\}$, &$\widehat{S}_{5}=\{3,7\}$,&$\widehat{S}_{8}=\{3,5\}$.
\end{tabular}
\end{center}
The sets $S_3, S_8, \widehat{S}_3, \widehat{S}_5, \widehat{S}_8$ are 8-admissible whereas $S_5$ is not.
\end{example}
\noindent
Our first result describes a recursive construction of the set $P_S({q}+1)$ from disjoint subsets in $\S_{{q}}$.
\begin{theorem}\label{thm:uno}Let $S=\{i_1,i_2,\ldots, i_s\}\subseteq [{n}+1]$ with $i_1<i_2<\ldots<i_s$ be an 
$({n}+1)$-admissible set. 
Then for $q\geq \max(S)$
\begin{align}
|\P_S(q+1)|&=2|\P_S(q)|+2\displaystyle\sum_{\ell=1}^{s}|\P_{S_{i_\ell}}(q)|+\displaystyle\sum_{\ell=1}^{s}|\P_{\widehat{S}_{i_\ell}}(q)|.
\end{align}
\end{theorem}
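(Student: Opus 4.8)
The plan is to build every permutation $\sigma \in \P_S(q+1)$ by inserting the largest letter $q+1$ into a permutation of $\S_q$ and tracking how the peak set changes. Concretely, given $\sigma \in \P_S(q+1)$, let $t$ be the position of the value $q+1$, and let $\tau \in \S_q$ be the word obtained by deleting that entry and relabeling nothing (the relative order is already a permutation of $[q]$). Conversely, from $\tau \in \S_q$ and a choice of position $t \in [q+1]$ we recover a unique $\sigma$. So $|\P_S(q+1)|$ counts pairs $(\tau, t)$ for which the resulting $\sigma$ has peak set exactly $S$. The bulk of the argument is a careful case analysis: since $q+1$ is the maximum, once it sits at position $t$ (with $2 \le t \le q$) it automatically creates a peak at $t$, it can never itself be part of a descent-bottom, and it destroys any peak that $\tau$ had at position $t-1$ or $t+1$ (those entries now have a larger right/left neighbor), while leaving all other peaks of $\tau$ intact. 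Inserting at the two ends $t=1$ or $t=q+1$ creates no new peak and can only destroy a peak at the old position adjacent to the end.

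The key steps, in order. First I would fix notation for the insertion map and verify the bijection between $\P_S(q+1)$ and the set of valid pairs $(\tau,t)$. Second, I would split on the value of $t$ relative to $\max(S) = i_s$ and the elements of $S$:

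\begin{itemize}
\item If $q+1$ is inserted so that it does \emph{not} land in position $i_\ell$ or adjacent to any element of $S$ in a way that would disturb the peak structure — i.e.\ $\sigma$ keeps a peak everywhere $S$ demands and creates no extra peak — then $\tau$ must itself have peak set $S$ (when $t$ is an endpoint or $t > i_s+1$), contributing the term $2|\P_S(q)|$: one copy for $t = q+1$ and, after checking, one more copy for an interior insertion far to the right that creates a peak at $q$... more precisely, the factor $2$ comes from the two ``harmless'' slots, which I would pin down exactly.
\item If $q+1$ occupies position $i_\ell$ for some $\ell$, it supplies the required peak at $i_\ell$ on its own; deleting it shifts all positions $> i_\ell$ down by one, so $\tau$ must have peak set $\widehat{S}_{i_\ell}$. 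This gives the term $\sum_\ell |\P_{\widehat S_{i_\ell}}(q)|$.
\item If $q+1$ is inserted immediately to the left or immediately to the right of where the peak $i_\ell$ ``should'' be, it creates the peak at the right spot while the letter that would have been the peak of $\tau$ is no longer one; unwinding the position shift shows $\tau$ must have peak set $S_{i_\ell}$, and the two sides (left vs.\ right) give the factor $2$ in $2\sum_\ell |\P_{S_{i_\ell}}(q)|$.
\end{itemize}

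Third, I would check these cases are exhaustive and pairwise disjoint, so the counts add, and that when $S_{i_\ell}$ fails to be $q$-admissible (two adjacent integers) the corresponding set $\P_{S_{i_\ell}}(q)$ is simply empty and contributes nothing — consistent with the formula. The main obstacle is the bookkeeping in the middle case: correctly identifying, for an insertion at position $t$, exactly which peaks of $\tau$ are created or destroyed, and translating the resulting condition on $\tau$'s peak set through the index shift $i_j \mapsto i_j - 1$ for $j > \ell$ into precisely $S_{i_\ell}$ or $\widehat S_{i_\ell}$ — including the boundary subtleties when $i_\ell$ is near $1$, near $q+1$, or when consecutive elements $i_{\ell-1}, i_\ell$ are close. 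Getting every edge case right, and confirming the plain factor $2\,|\P_S(q)|$ really does account for all insertions that leave the peak set unshifted, is where the care is needed; the rest is routine.
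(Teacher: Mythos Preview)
Your overall strategy---insert $q+1$ into a permutation of $\S_q$ and track the peak set---is exactly the paper's approach, but your case analysis is substantively wrong, not merely imprecise.

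The first thing to notice is that if $q+1$ sits at an interior position $t$ with $2\le t\le q$, then $t$ is automatically a peak of $\sigma$, so $t\in S$. Hence $t\in\{1,i_1,\ldots,i_s,q+1\}$; there are no other legal slots. This immediately kills both of your proposed sources for the extra terms. Your ``interior insertion far to the right that creates a peak at $q$'' would force $q\in S$, so it cannot account for the second copy of $|\P_S(q)|$ in general. Likewise, inserting ``immediately to the left or right of where the peak $i_\ell$ should be'' means $t=i_\ell\pm 1$, and since $S$ is admissible neither $i_\ell-1$ nor $i_\ell+1$ lies in $S$; such insertions never produce peak set $S$. So your description of the $2\sum_\ell|\P_{S_{i_\ell}}(q)|$ term is incorrect.

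What actually happens at $t=i_\ell$ is that deletion and index shift force $\P(\tau)$ to agree with $\{i_1,\ldots,i_{\ell-1}\}$ below $i_\ell-1$ and with $\{i_{\ell+1}-1,\ldots,i_s-1\}$ above $i_\ell$, while $\tau$ may or may not have a peak at $i_\ell-1$ or at $i_\ell$ (not both). The three possibilities are $\P(\tau)=\widehat S_{i_\ell}$, $\P(\tau)=S_{i_\ell}$, or $\P(\tau)=\{i_1,\ldots,i_{\ell-1},i_\ell,i_{\ell+1}-1,\ldots,i_s-1\}=S_{i_{\ell+1}}$ (with the convention $S_{i_{s+1}}=S$). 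Together with $t=1$ (giving $\tau\in\P_{S_{i_1}}(q)$) and $t=q+1$ (giving $\tau\in\P_S(q)$), this yields each $|\P_{S_{i_\ell}}(q)|$ exactly twice---once from $t=i_\ell$ and once from $t=i_{\ell-1}$ (or $t=1$ when $\ell=1$)---and a second $|\P_S(q)|$ from $t=i_s$. That is where the factor $2$ really comes from.
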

\begin{proof}
We recursively build all permutations in $\P_S(q+1) \subset \S_{q+1}$ from permutations in $\S_q$ by inserting the number $q+1$ (in different positions) in the permutations of ${\S}_q$. Let $\pi\ =\ \pi_1 \cdots \pi_q$ be a permutation in ${\S}_q$ {and consider the following} five cases:\\
	\textbf{Case 1:} If $\pi\in\P_S(q)$, then by inserting $q+1$ after $\pi_q$ we create the permutation \[\hat{\pi}=\pi_1\pi_2\cdots\pi_q(q+1)\in\P_S(q+1).\]
	\textbf{Case 2:} If $\pi\in\P_S(q)$, then by inserting $q+1$ between $\pi_{i_{s}-1}$ and $\pi_{i_{s}}$ we {create the} permutation \[\hat{\pi}= \pi_1 \cdots \pi_{i_{s}-1} (q+1) \pi_{i_s} \cdots \pi_{q}\in\P_S(q+1).\] 
	\textbf{Case 3:} If $\pi\in \P_{S_{i_\ell}}(q)$ for any $1\leq \ell \leq s$, then by inserting $q+1$ between $\pi_{i_{\ell}-1}$ and $\pi_{i_{\ell}}$ we create the permutation \[\hat{\pi}=\pi_1 \cdots \pi_{i_{\ell}-1} (q+1)\pi_{i_{\ell}}\cdots \pi_q\in\P_{S}(q+1).\]
	\textbf{Case 4.1:} If $\pi\in\P_{S_{i_\ell}}(q)$ and $1<\ell\leq s$, {then} $\pi$ has a peak at position $i_{\ell -1}$ and by inserting $q+1$ between $\pi_{i_{\ell -1}-1}$ and $\pi_{i_{\ell -1}}$ we create the permutation \[\hat{\pi}=\pi_1 \cdots  \pi_{i_{\ell -1}-1} (q+1)  \pi_{i_{\ell -1}} \cdots \pi_q \in \P_S(q+1).\]  
	\textbf{Case 4.2:} If $\pi\in\P_{S_{i_1}}(q)$ where $S_{i_{1}}= \{i_1-1,i_2-1, \ldots, i_s-1\}$, then by inserting $q+1$ to the left of $\pi_1$ we create the permutation  \[\hat{\pi}=(q+1)\pi_1 \cdots  \pi_q \in \P_S(q+1).\]
\textbf{Case 5:} If  $\pi\in\P_{\widehat{S}_{i_\ell}}(q)$ for any $1\leq \ell \leq s$, then  $\pi$ has no peak at position $i_{\ell}$. By inserting $q+1$ between $\pi_{i_{\ell}-1}$ and $\pi_{i_{\ell}}$ we create the permutation \[\hat{\pi}=\pi_1 \cdots \pi_{i_{\ell}-1}(q+1) \pi_{i_{\ell}} \cdots \pi_q\in\P_{S}(q+1).\] 
	
The permutations $\hat{\pi}$ created via Cases 1 through 5 are distinct elements of $\P_S(q+1)$. This is because given any two such permutations with $(q+1)$ in the same position, if we remove $q+1$ we get two permutations in $\S_q$ with distinct peak sets. In fact, we show that $\P_S(q+1)$ is precisely the union of the permutations $\hat{\pi}$ appearing in Cases 1 through 5.  If this is the case, the sets being disjoint gives us

\[|\P_S(q+1)|=2|\P_S(q)| + 2\displaystyle\sum_{\ell=1}^{s}|\P_{S_{i_\ell}}(q)|+\displaystyle\sum_{\ell=1}^{s}|\P_{\widehat{S}_{i_\ell}}(q)|.\] 
	
Note that any permutation $\hat{\pi}$ in $\P_S(q+1)$ has the number $(q+1)$ in one of the following positions: $1, i_1, \ldots, i_s, q+1$. If $(q+1)$ is in position $q+1$, then removing it from the permutation {$\hat{\pi}$} yields a permutation $\pi$ in Case 1. If $(q+1)$ is in the first position, then removing it from the permutation {$\hat\pi$} yields a permutation {$\pi$} in Case 4.2. If $(q+1)$ is in position $i_\ell$ for some $1\leq \ell\leq s$, then removing it {from the permutation $\hat\pi$} leads to three possibilities: a permutation with a peak at position $i_\ell$ (Cases 2 and 4.1), a permutation with a peak at position $i_{\ell}-1$ (Case 3), or a permutation without a peak at positions $i_{\ell}-1$ or $i_\ell$ (Case 5). Thus we have created all permutation in $\P_S(q+1)$ via the constructions in Cases 1-5. 
\end{proof}

The following result plays a key role in the proof of Theorem \ref{thm:main}.
\begin{corollary}\label{cor:uno}Let $S=\{i_1,i_2,\ldots, i_s\}\subseteq [n+1]$ with $i_1<i_2<\ldots<i_s$ be an $(n+1)$-admissible set. 
Then the following equality of polynomial holds
\begin{align}
\Delta\pS(x)&\ =\ \displaystyle\sum_{\ell=1}^{s}p_{S_{i_\ell}}(x)+\displaystyle\sum_{\ell=1}^{s}p_{\widehat{S}_{i_\ell}}(x).
\end{align}
\end{corollary}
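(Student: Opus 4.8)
The plan is to derive Corollary~\ref{cor:uno} directly from the recursion in Theorem~\ref{thm:uno} by passing from the counting functions $|\P_T(q+1)|$ to the peak polynomials $p_T(x)$ via the defining relation $|\P_T(q)| = p_T(q)\,2^{q-|T|-1}$ from Equation~(\ref{eq:BBZ}), and then interpreting the resulting identity in terms of the finite difference operator $\Delta$. First I would fix $q \geq \max(S)$ and rewrite each term of Theorem~\ref{thm:uno} using Equation~(\ref{eq:BBZ}): the left side becomes $p_S(q+1)2^{q-|S|}$; the term $2|\P_S(q)|$ becomes $2 p_S(q) 2^{q-|S|-1} = p_S(q)2^{q-|S|}$; each $2|\P_{S_{i_\ell}}(q)|$ becomes $2 p_{S_{i_\ell}}(q) 2^{q-|S_{i_\ell}|-1}$; and each $|\P_{\widehat{S}_{i_\ell}}(q)|$ becomes $p_{\widehat{S}_{i_\ell}}(q)2^{q-|\widehat{S}_{i_\ell}|-1}$.

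The key bookkeeping step is to check the cardinalities of the auxiliary sets: $|S_{i_\ell}| = |S| = s$ (shifting and decrementing elements does not change the count), so $2 p_{S_{i_\ell}}(q)2^{q-s-1} = p_{S_{i_\ell}}(q)2^{q-s}$; and $|\widehat{S}_{i_\ell}| = |S| - 1 = s-1$ (one element is deleted), so $p_{\widehat{S}_{i_\ell}}(q)2^{q-(s-1)-1} = p_{\widehat{S}_{i_\ell}}(q)2^{q-s}$. Thus every term carries the common factor $2^{q-s} = 2^{q-|S|}$, which cancels uniformly, leaving
\[
p_S(q+1) = p_S(q) + \sum_{\ell=1}^{s} p_{S_{i_\ell}}(q) + \sum_{\ell=1}^{s} p_{\widehat{S}_{i_\ell}}(q)
\]
for every integer $q \geq \max(S)$. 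Rearranging gives $(\Delta p_S)(q) = p_S(q+1) - p_S(q) = \sum_{\ell} p_{S_{i_\ell}}(q) + \sum_{\ell} p_{\widehat{S}_{i_\ell}}(q)$ for all such $q$.

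Finally, I would promote this from an identity of values to an identity of polynomials: both sides are polynomials in $x$ (the left side by Theorem~1.1 of \cite{BBS13}, the right side as a finite sum of peak polynomials), and they agree at infinitely many integer points $q \geq \max(S)$, hence they are equal as polynomials. The main subtlety — really the only point requiring care — is making sure Equation~(\ref{eq:BBZ}) legitimately applies to each auxiliary set at the argument $q$; this needs $q \geq \max$ of each set and each set to be admissible. Since $q \geq \max(S)$ and each $\widehat{S}_{i_\ell}$ is $n$-admissible (as noted after the definition) with maximum at most $\max(S) \le q$, these cases are fine. The potentially non-admissible sets $S_{i_\ell}$ require a brief remark: when $S_{i_\ell}$ contains adjacent integers it is not admissible, but in that regime one should interpret $p_{S_{i_\ell}}$ via the recursive/polynomial extension used by Billey--Burdzy--Sagan (equivalently, $|\P_{S_{i_\ell}}(q)| = 0$ is replaced by the polynomial convention), so I would either restrict attention to $q$ large enough that the formula of Theorem~\ref{thm:uno} still reads correctly, or invoke the standard convention that makes $p_T$ well-defined for all $T$; with that convention in place the cancellation and polynomial-identification steps go through verbatim.
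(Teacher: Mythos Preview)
Your proposal is correct and follows essentially the same route as the paper: substitute Equation~(\ref{eq:BBZ}) into the recursion of Theorem~\ref{thm:uno}, use $|S_{i_\ell}|=|S|$ and $|\widehat{S}_{i_\ell}|=|S|-1$ to see that every term carries the common factor $2^{q-|S|}$, cancel, and then conclude the polynomial identity from agreement at infinitely many integers $q\geq \max(S)$.

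One small remark on your closing caveat: the option ``restrict to $q$ large enough'' does not actually resolve the non-admissible $S_{i_\ell}$ issue, since a set with adjacent integers remains non-admissible for every $q$. The clean fix (which the paper uses implicitly and which you also mention) is simply the convention $p_T\equiv 0$ for non-admissible $T$; then Equation~(\ref{eq:BBZ}) holds trivially because $|\P_T(q)|=0$, and the substitution goes through without further comment.
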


\begin{proof}
Let $m=\max(S)$.  It suffices to show that the two polynomials agree at infinitely many values, and to do so we show that for any $q \geq m$, 
\begin{align}
\Delta\pS(q)&\ =\ \displaystyle\sum_{\ell=1}^{s}p_{S_{i_\ell}}(q)+\displaystyle\sum_{\ell=1}^{s}p_{\widehat{S}_{i_\ell}}(q).
\end{align}
Observe that for such $q$, substituting Equation \eqref{eq:BBZ} appropriately into Theorem \ref{thm:uno} yields
\begin{align}
2^{q-|S|}\pS(q+1)-2^{q-|S|}\pS(q)&=\displaystyle\sum_{\ell=1}^{s}2^{q-|S_{i_\ell}|}p_{S_{i_\ell}}(q)+\displaystyle\sum_{\ell=1}^{s}2^{q-|\widehat{S}_{i_\ell}|-1}p_{\widehat{S}_{i_\ell}}(q)\nonumber\\
&=2^{q-|S|}\displaystyle\sum_{\ell=1}^{s}p_{S_{i_\ell}}(q)+2^{q-|S|}\displaystyle\sum_{\ell=1}^{s}p_{\widehat{S}_{i_\ell}}(q)\label{eq1}
\end{align}
where the last equality holds since $|S_{i_\ell}|=|S|$ and $|\widehat{S}_{i_\ell}|=|S|-1$ for all $1\leq \ell\leq s$. 
The result follows from multiplying Equation \eqref{eq1} by ${1}/{2^{q-|S|}}$.
\end{proof}


We are now ready to prove the positivity conjecture for peak polynomials.
\begin{proof}[Proof of Theorem \ref{thm:main}]
We induct on  $m=\max(S)$. The base case is when $S=\{2\}$. 
It is known that $p_{\{2\}}(x) =x-2$ \cite[Theorem 6]{BBS13}.  Hence, we see
$(\Delta p_{\{2\}})(x)=1 >0$, and $(\Delta^2 p_{\{2\}})(x)=0$. 
Now suppose $S$ is an arbitrary admissible set satisfying the conditions of the theorem and further suppose the theorem holds for all peak polynomials $p_T(x)$ with admissible set $T$ and $\max(T)< m$.  Let $S=\{i_1,i_2,\ldots, i_s\}\subseteq [n]$ with $i_1<i_2<\ldots<i_s$, and for $1\leq \ell\leq s$ construct the sets $
S_{i_\ell}$ and $\widehat{S}_{i_\ell}$.
From Corollary~\ref{cor:uno}, we have
\begin{align*}
\Delta\pS(x)&\ =\ \displaystyle\sum_{\ell=1}^{s}p_{S_{i_\ell}}(x)+ \displaystyle\sum_{\ell=1}^{s}p_{\widehat{S}_{i_\ell}}(x).
\end{align*}
For any $1\leq j \leq m-1$, 
\begin{align}
\Delta^j\pS(x) &\ =\ \Delta^{j-1} \left( \Delta\pS(x) \right) \ =\ \displaystyle\sum_{\ell=1}^{s}\Delta^{j-1} 
p_{S_{i_\ell}}(x)+ \displaystyle\sum_{\ell=1}^{s}\Delta^{j-1} p_{\widehat{S}_{i_\ell}}(x).\label{eq:deltaj}
\end{align}
Let $k \geq m$.  Recall that for all $\ell \in \{1,2,\ldots,s\}$ we have $k\geq m > \max(S_{i_\ell})$ and $k \geq m > \max(\widehat{S}_{i_\ell})$.  
Consequently, since the degree of $p_T(x)$ is $\max(T)-1$ for any admissible peak set $T \subseteq [n]$, we have that 
	$\deg(p_{\widehat{S}_{i_\ell}}(x))=m-2=\deg( p_{S}(x)) -1$ for $1\leq \ell <s$,  $\deg(p_{S}(x)) > \deg( p_{\widehat{S}_{i_s}}(x))$, and $\deg(p_{S}(x)) > \deg( p_{S_{i_\ell}}(x))$ for all $1 \leq \ell \leq s$.  
By induction it follows that for $k \geq m$ 
	\[\Delta^{j-1} p_{S_{i_\ell}}(k) \geq 0 \text{ for } 1 \leq \ell \leq  s, \quad \Delta^{j-1} p_{\widehat{S}_{i_s}}(k) \geq 0, \;\; \text{ and }\;\; \Delta^{j-1} p_{\widehat{S}_{i_\ell}}(k)>0 \text{ for } 1 \leq \ell < s.\]
	 From Equation \eqref{eq:deltaj} we see that $\Delta^j\pS(k) > 0$. 
Finally, we claim that $(\Delta^mp_S)(x)=0$. Since $\deg(p_S(x))=m-1$ and the 
operator $\Delta$ decreases the degree by one, we see that $(\Delta^{m-1}p_S)(x)=c$ is a positive constant and 
$(\Delta^{m}p_S)(x)=0.$ 
\end{proof}

\section{Acknowledgments}
We thank Sara Billey for introducing us to this problem, for many helpful conversations, and for insightful comments regarding an earlier version of this manuscript. We 
thank Lucas Everham, Vince Marcantonio, Karen Marino, Darleen Perez-Lavin, and Nora Youngs for beneficial conversations regarding this topic. 
This work was 
supported through mini-collaboration travel grants via awards from the National Science Foundation (DMS-1545136) and the National 
Security Agency (H98230-15-1-0091). The second author gratefully acknowledges travel support from the United States Military 
Academy. The third author gratefully acknowledges funding support from the Seidler Student/Faculty Undergraduate 
Scholarly Collaboration Fellowship Program at Florida Gulf Coast University.

\bibliography{DEHIM} 

\bibliographystyle{plain}

\end{document}